\def\t{\otimes}
\newcommand{\defn}{\textbf}
\DeclareMathOperator{\Hom}{Hom}
\DeclareMathOperator{\Tor}{Tor}
\DeclareMathOperator{\Ext}{Ext}
\DeclareMathOperator{\J}{J}
\DeclareMathOperator{\U}{U}
\DeclareMathOperator{\FF}{\mathit{F}}
\DeclareMathOperator{\HH}{H}
\DeclareMathOperator{\Om}{\Omega}
\DeclareMathOperator{\BLb}{BLb}
\DeclareMathOperator{\ad}{ad}
\DeclareMathOperator{\Innder}{InnDer}
\DeclareMathOperator{\lie}{Lie}
\DeclareMathOperator{\Span}{Span}
\DeclareMathOperator{\Der}{Der}
\newcommand{\cc}{\mathbb{C}}
\newcommand{\kk}{\mathbb{K}}
\newcommand{\Ab}{\mathsf{Ab}}
\newcommand{\Mod}{\mathsf{Mod}}
\newcommand{\Lie}{\mathsf{Lie}_{\kk}}
\newcommand{\Ass}{\mathsf{Alg}_{\kk}}
\newcommand{\LMod}{\text{$L$-}\mathsf{Mod}_{\kk}}
\newcommand{\nLie}{\text{$n$-}\mathsf{Lie}_{\kk}}
\newcommand{\nLeib}{\text{$n$-}\mathsf{Leib}_{\kk}}
\newcommand{\tensor}{\otimes}
\newcommand{\noproof}{\hfil\qed}
\newtheorem{Th}[subsection]{Theorem}
\newtheorem{Pro}[subsection]{Proposition}
\newtheorem{Le}[subsection]{Lemma}
\newtheorem{Co}[subsection]{Corollary}
\theoremstyle{definition}
\newtheorem{De}[subsection]{Definition}
\newtheorem{Ex}[subsection]{Example}
\theoremstyle{remark}
\newtheorem{Rem}[subsection]{Remark}
\begin{document}

\title[Do $n$-Lie algebras have universal enveloping algebras?]{Do $n$-Lie algebras have\\ universal enveloping algebras?}
\author{X. García-Martínez}
\address{[X. García-Martínez] Department of Algebra, University of Santiago de Compostela, 15782 Santiago de Compostela, Spain.}
\email{xabier.garcia@usc.es}
\author{R. Turdibaev}
\address{[R. Turdibaev] Department of Algebra, University of Santiago de Compostela, 15782 Santiago de Compostela, Spain.}
\email{rustamtm@yahoo.com}
\author{T. Van der Linden}
\address{[T. Van der Linden] Institut Recherche en Mathématique et Physique, Université catholique de
Louvain, chemin du cyclotron 2 bte L7.01.02, 1348 Louvain-la-Neuve, Belgium}
\email{tim.vanderlinden@uclouvain.be}

\thanks{
The authors were supported by Ministerio de Economía y 
Competitividad (Spain), grant MTM2013-43687-P (European 
FEDER support included).
The first two authors were also supported by Xunta de Galicia, 
grant GRC2013-045 (European FEDER support included), and the first author 
by an FPU scholarship, Ministerio de Educación, Cultura y Deporte (Spain).
The third author is a Research Associate of the Fonds de la Recherche Scientifique--FNRS}

\begin{abstract}
The aim of this paper is to investigate in which sense, for $n\geq 3$, $n$-Lie algebras admit universal enveloping algebras.
There have been some attempts at a construction (see~\cite{Dzh} and~\cite{Bava}) but after analysing those we come to the conclusion that they cannot be valid in general. 
We give counterexamples and sufficient conditions. 

We then study the problem in its full generality, showing that universality is incompatible with the wish that the category of modules over a given $n$-Lie algebra $L$ is equivalent to the category of modules over the associated algebra $\U(L)$. Indeed, an \emph{associated algebra functor} $\U \colon \nLie \to \Ass$ inducing such an equivalence does exist, but this kind of functor never admits a right adjoint. 

We end the paper by introducing a (co)homology theory based on the associated algebra functor $\U$.
\end{abstract}
\subjclass[2010]{}
\keywords{}

\maketitle

\section{Introduction}

The algebraic concept of an \emph{$n$-Lie algebra} (also called a \emph{Filippov algebra} or a \emph{Nambu algebra}) 
is a natural generalisation of Lie algebras. Alternative generalisations of Lie algebras to $n$-ary brackets exist, 
such as \emph{Lie triple systems}~\cite{Jac}, but we shall not study those in the present paper.
By definition, an $n$-Lie algebra is a $\kk$-module with a skew-symmetric $n$-ary operation which is also a derivation. In recent years these have shown their relevance
in some areas of physics such as Nambu mechanics~\cite{Nam} or string and membrane theory~\cite{BaLa1,BaLa2}.

In this article we investigate how to extend the concept of \emph{universal enveloping algebra}, an important basic tool in theory of ordinary (= $2$-) Lie algebras, to $n$-Lie algebras where $n\geq 3$.

Given a Lie algebra $L$, its universal enveloping algebra $\U(L)$ has three distinguishing characteristics:
\begin{enumerate}
\item[(U1)] \emph{equivalent representations}: the category of Lie modules over $L$ is equivalent to the category of ``standard'' modules over $\U(L)$;
\item[(U2)] \emph{universality}: the functor $\U \colon \Lie \to \Ass$ has a right adjoint
\[
(-)_{\lie} \colon {\Ass \to \Lie}
\]
which endows an associative algebra with a Lie algebra structure via the bracket
$[a, b] = ab - ba$;
\item[(U3)] enveloping algebras are \emph{enveloping}: if $L$ is free as $\kk$-module (for instance, whenever $\kk$ is a field), then the $L$-component $\eta_{L}\colon {L\to \U(L)}$ of the unit~$\eta$ of the adjunction considered in (U2) is a monomorphism~\cite{Hig}.
\end{enumerate}

In the literature, already some attempts at introducing universal enveloping algebras for $n$-Lie algebras have been made~\cite{Bava,Dzh}. However, in the beginning of Section~\ref{S:Constr} we give an example showing that those cannot be fully valid. 
The problem with these approaches is that they depend on the existence of a functor from $\nLie$ to $\Lie$, analogous to the Daletskii-Takhtajan functor
for Leibniz algebras~\cite{DaTa}. The construction proposed in~\cite{Bava}, though, produces an object is not always a Lie algebra. That is to say, the ``functor'' in question does not land in the right category. In Corollary~\ref{Abelian} and Proposition~\ref{2dim}
we give some conditions which establish when the construction of~\cite{Bava} is, or isn't, a Lie algebra. Luckily, this imprecise definition is not an obstruction to further results in the papers~\cite{Bava,Dzh}, since those focus on simple $n$-Lie algebras over the complex numbers, and in Remark~\ref{notnec} we explain that for those $n$-Lie algebras the construction proposed in~\cite{Bava,Dzh} does indeed work. Nevertheless, the general definition of universal enveloping algebra proposed there is not correct.

One problem we face when extending the concept of universal enveloping algebra to the category of $n$-Lie algebras is the lack of a natural generalisation of the functor $(-)_{\lie}$, so that $\U$ cannot be defined via (U2). 
Therefore, using a standard categorical technique, in Section~\ref{S:Univ} we define a functor $\U \colon \nLie \to $ $\Ass$ such that (U1) holds: 
the category of modules over an $n$-Lie algebra $L$ is equivalent to the category of $\U(L)$-modules. It happens that this functor does not have a right adjoint. In fact, we prove that \emph{any} functor
satisfying (U1) cannot have a right adjoint of the kind needed for (U2), so that the requirements (U1) and (U2) are shown to be mutually incompatible. And without condition (U2), the third requirement (U3), which asks that components of the unit of the adjunction from (U2) are monomorphisms, loses its sense. We thus end up with a functor $\U \colon \nLie \to \Ass$ satisfying just (U1), which we call the \emph{associated algebra functor}.

In the final Section~\ref{S:Hom} we extend Lie algebra (co)homology to a (co)homology theory based on this associated algebra functor
and we prove it to be different from the cohomology theories introduced in~\cite{Tak},~\cite{DaTa} and~\cite{AKMS}.

\section{Preliminaries on $n$-Lie algebras}\label{S:prel}

Let $\kk$ be a commutative unital ring and $n$ a natural number, $n \geq 2$. The following definitions first appeared in~\cite{Fil, CaLoPi}.

\subsection{$n$-Leibniz and $n$-Lie algebras}
An \defn{$n$-Leibniz algebra} $L$ is a $\kk$-module equipped with an $n$-linear 
operation $L^{n}\to L$, so a linear map $[-, \dots, -] \colon L^{\otimes n} \to L$, satisfying the identity
\begin{equation}\label{fundamental}\tag{$\star$}
\big[[x_1, \dots, x_{n}], y_1, \dots, y_{n-1}\big] = \sum\limits_{i=1}^n \big[x_1, \dots, x_{i-1}, [x_i, y_1, \dots, y_n], x_{i+1}, \dots, x_n \big]
\end{equation}
for all $x_i$, $y_i \in L$. A homomorphism of $n$-Leibniz algebras is a $\kk$-module homomorphism preserving this bracket; this defines the category $\nLeib$. 

An \defn{$n$-Lie algebra} $L$ is an $n$-Leibniz algebra of which the bracket $[-, \dots, -]$ factors through the exterior product to a morphism
\[
\Lambda^{n}L=\underbrace{L\wedge \cdots \wedge L}_{\text{$n$ factors}} \to L.
\]
We thus obtain the full subcategory $\nLie$ of $\nLeib$ determined by the $n$-Lie algebras.

The latter condition means that the bracket $[-, \dots, -]$ is not just $n$-linear, but also \defn{alternating}: it vanishes on any $n$-tuple with a pair of equal coordinates. In other words, $[x_{1},\dots,x_{n}]=0$ as soon as there exist $1\leq i<j\leq n$ for which $x_{i}=x_{j}$.

When $n=2$, identity \eqref{fundamental} yields the Leibniz identity. In this case, being alternating is equivalent to skew-symmetry, which gives the Jacobi identity. Thus the above definition describes Leibniz and Lie algebras, respectively. 

\subsection{Derivations}
A linear endomap $d\colon L\to L$ on an $n$-Lie algebra $L$ is called a 
\defn{derivation} if 
\[
d([x_1,x_2,\dots , x_n])= \sum_{i=1}^n [x_1,\dots, d(x_i),\dots , x_n].
\]

The $\kk$-module of all derivations of a given $n$-Lie algebra $L$ is 
denoted by $\Der(L)$ and forms a Lie algebra with respect to the commutator $[d_{1},d_{2}]=d_{1}d_{2}-d_{2}d_{1}$.

\subsection{Ideals}
An \defn{ideal} of an $n$-Lie algebra is a normal subalgebra. It is easily seen that a $\kk$-submodule $I$ of an $n$-Lie algebra $L$ is an ideal if and only if $[I,L,\dots, L]\subseteq I$. 

\subsection{Right multiplication, adjoint action}
Given a generator $x=x_1\otimes \dots \otimes x_{n-1}$ of $L^{\otimes(n-1)}$, the \defn{right multiplication} and the \defn{adjoint action} (also called \defn{left multiplication}) by $x$ are maps
\[
R_{x}=R(x_1,\dots,x_{n-1})\quad\text{and}\quad\ad_{x}=\ad(x_1,\dots,x_{n-1})\colon L \to L
\]
respectively defined by 
\[
R(x_1,\dots,x_{n-1})(a)=[a,x_1,\dots,x_{n-1}]
\]
and
\[
\ad(x_1,\dots,x_{n-1})(b)=[x_1,\dots,x_{n-1},b]
\]
for $a$, $b\in L$. Clearly, $\ad_x=(-1)^{n-1}R_x$, and due to identity~\eqref{fundamental} both maps are derivations. They are called \defn{inner derivations} of $L$ 
and generate an ideal $\Innder(L)$ of $\Der(L)$. We will use the same notations $\ad_x$ and $R_x$ for the extensions (by derivation) of these maps to the entire tensor algebra $T(L)$. (That is to say,  $R_{x}(a_{1}\t a_{2})=R_{x}(a_{1})\t a_{2}+a_{1}\t R_{x}(a_{2})$, etc.)

\subsection{The centre}
The ideal $Z(L)=\{ z\in L \mid \ad_x(z)=0, \forall x\in L^{\otimes(n-1)}\}$ is called the \defn{centre} of $L$. 

\subsection{Simple $n$-Lie algebras}
Given an ideal $I$, we write $I^1=[I,L,\dots,L]$ for the $\kk$-submodule 
spanned by the elements $R_x(i)$ where $i\in I$ and $x\in L^{\otimes(n-1)}$. It is easy to see that $I^1$ is an ideal of $L$. If $L^{1}\neq 0$ (so that it is non-abelian, i.e., it doesn't come equipped with the zero bracket) and $L$ does not admit any non-trivial ideals then $L$ is called a \defn{simple} $n$-Lie algebra.

We now recall from~\cite{Fil} an important example of an $(n+1)$-dimensional 
$n$-Lie algebra which is an analogue of the three-dimensional 
Lie algebra with the cross product as multiplication. 

\begin{Ex}\label{simple}
Let $\kk$ be a field and $V_n$ an $(n+1)$-dimensional $\kk$-vector space with a basis $\{ e_1,\dots, e_{n+1}\}$. 
Then $V_n$, equipped with the skew-symmetric $n$-ary multiplication induced by
\[
[e_1,\dots, e_{i-1},e_{i+1},\dots, e_{n+1}]=(-1) ^{n+1+i}e_i,\qquad 1\leq i \leq n+1,
\]
is an $n$-Lie algebra.
\end{Ex}

This algebra is a simple $n$-Lie algebra. Conversely, as shown in~\cite{Lin}, 
over an algebraically closed field $\kk$ all simple $n$-Lie 
algebras are isomorphic to $V_n$.

\subsection{Leibniz and Lie algebras associated to an $n$-Lie algebra}
Given an $n$-Lie algebra $L$, we introduce the operations
\begin{align*}
[-,-]&\colon L^{\otimes(2n-2)} \to L^{\otimes(n-1)}\colon x\tensor y\mapsto [x,y]=\ad_x(y),\\
-\circ-&\colon L^{\otimes(2n-2)} \to L^{\otimes(n-1)}\colon x\tensor y\mapsto x\circ y =\tfrac12\bigl(\ad_x(y)-\ad_y(x)\bigr).
\end{align*}
Note that $\circ$ is skew-symmetric. Furthermore, the operations coincide 
if and only if $\ad_x(y)=-\ad_y(x)$, i.e., when $[-,-]$ is skew-symmetric. 
These two products have the following property relating them to the adjoint action.

\begin{Pro}\label{commutator}
For any $x$, $y \in L^{\otimes(n-1)}$ the equality
\[
[\ad_x, \ad_y]=\ad_{[x,y]}=\ad_{x\circ y}
\]
holds.
\end{Pro}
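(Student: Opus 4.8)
The plan is to reduce everything to the fact, already noted after the definition of right and adjoint multiplication, that each $\ad_x$ is a derivation of $L$ (a consequence of the fundamental identity~\eqref{fundamental}). Writing $x=x_1\otimes\cdots\otimes x_{n-1}$ and $y=y_1\otimes\cdots\otimes y_{n-1}$, I would first compute $[\ad_x,\ad_y]$ on an arbitrary $b\in L$. Expanding $\ad_x(\ad_y(b))=\ad_x\bigl([y_1,\dots,y_{n-1},b]\bigr)$ by the derivation rule yields
\[
\ad_x(\ad_y(b))=\sum_{j=1}^{n-1}[y_1,\dots,\ad_x(y_j),\dots,y_{n-1},b]+[y_1,\dots,y_{n-1},\ad_x(b)].
\]
The final summand is exactly $\ad_y(\ad_x(b))$, so it cancels upon forming the commutator, leaving
\[
[\ad_x,\ad_y](b)=\sum_{j=1}^{n-1}[y_1,\dots,\ad_x(y_j),\dots,y_{n-1},b].
\]

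The second step is to recognise this sum. By definition $[x,y]=\ad_x(y)$, and since $\ad_x$ acts on $L^{\otimes(n-1)}$ as a derivation, $\ad_x(y)=\sum_{j}y_1\otimes\cdots\otimes\ad_x(y_j)\otimes\cdots\otimes y_{n-1}$. As $\ad_{(-)}\colon L^{\otimes(n-1)}\to\End(L)$ is $\kk$-linear in its tensor argument, applying it termwise shows that the displayed sum is precisely $\ad_{[x,y]}(b)$. This gives the first equality $[\ad_x,\ad_y]=\ad_{[x,y]}$.

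For the second equality I would use antisymmetry of the commutator. Interchanging $x$ and $y$ in the identity just proved gives $[\ad_y,\ad_x]=\ad_{[y,x]}$, and since $[\ad_y,\ad_x]=-[\ad_x,\ad_y]$ we obtain $\ad_{[y,x]}=-\ad_{[x,y]}$. Recalling $x\circ y=\tfrac12\bigl([x,y]-[y,x]\bigr)$ and using linearity of $\ad_{(-)}$ once more,
\[
\ad_{x\circ y}=\tfrac12\bigl(\ad_{[x,y]}-\ad_{[y,x]}\bigr)=\tfrac12\bigl(\ad_{[x,y]}+\ad_{[x,y]}\bigr)=\ad_{[x,y]}.
\]

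No genuinely hard step is involved; the entire content is carried by the fundamental identity through the derivation property of $\ad_x$. The only points requiring care are bookkeeping ones: keeping the derivation extension of $\ad_x$ to the tensor algebra consistent with the definition $[x,y]=\ad_x(y)$, and using that $\ad_{(-)}$ is linear, so that it may legitimately be applied to the non-decomposable tensors $[x,y]$ and $x\circ y$ rather than only to pure tensors.
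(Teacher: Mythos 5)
Your proof is correct and follows essentially the same route as the paper's: both rest on the fact that the fundamental identity makes $\ad_x$ a derivation, expand one composite, cancel the commuted term, and identify the leftover sum as $\ad$ of a derivation-extended tensor. The only (cosmetic) difference is that you expand $\ad_x\ad_y(b)$ directly, which lands on $\ad_{[x,y]}$ without the $(-1)^{n-1}$ sign bookkeeping and the ``by symmetry'' detour through $-\ad_{\ad_y(x)}$ that the paper uses.
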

\begin{proof}
Let $x=x_2\otimes\dots\otimes x_n$, $y=y_2\otimes \dots \otimes y_n$ and $x_1 \in L$. Then from identity (\ref{fundamental}) we deduce
\[
\ad_y\ad_x(x_1)=\ad_x\ad_y(x_1)+(-1)^{n-1}\sum_{k=2}^n[x_1,\dots, \ad_y(x_k),\dots, x_n],
\]
which is equivalent to 
\[
[\ad_x,\ad_y](x_1)=(-1)^{n}\sum_{k=2}^n[x_1,x_2,\dots, \ad_y(x_k),\dots, x_n]=-\ad_z(x_1),
\]
where $z=\ad_y(x)$.	By symmetry, $[\ad_y,\ad_x]=-\ad_w$, where $w= \ad_x(y)$.
Since $\Innder(L)$ is a Lie algebra we obtain $[\ad_x, \ad_y]=\ad_w=\ad_{[x,y]}$ 
and $[\ad_x, \ad_y]=\tfrac12 (\ad_w-\ad_v)=\ad_{x\circ y}$. 
\end{proof}

The following result is due to Daletskii and Takhtajan~\cite{DaTa}.

\begin{Th}
Let $L$ be an $n$-Lie algebra. Then $L^{\otimes(n-1)}$ with bracket $[-,-]$ is a Leibniz algebra.\noproof
\end{Th}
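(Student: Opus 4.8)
The plan is to verify the Leibniz identity directly from the definition $[x,y]=\ad_x(y)$, reducing it to the commutator formula of Proposition~\ref{commutator}. First I would check that the bracket is well defined on $L^{\otimes(n-1)}$: each $\ad_x$ with $x\in L^{\otimes(n-1)}$ is a derivation of $L$, so its extension to $T(L)$ preserves the tensor grading, whence $\ad_x(y)\in L^{\otimes(n-1)}$ whenever $y\in L^{\otimes(n-1)}$ and the bracket is a genuine bilinear map $L^{\otimes(n-1)}\otimes L^{\otimes(n-1)}\to L^{\otimes(n-1)}$.

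The identity to establish, for $x$, $y$, $z\in L^{\otimes(n-1)}$, is
\[
[x,[y,z]]=[[x,y],z]+[y,[x,z]],
\]
which expresses precisely that each left multiplication $[x,-]=\ad_x$ is a derivation for the new bracket. Rewriting every bracket by means of $[u,v]=\ad_u(v)$ converts this into
\[
\ad_x\ad_y(z)=\ad_{[x,y]}(z)+\ad_y\ad_x(z),
\]
that is, $[\ad_x,\ad_y](z)=\ad_{[x,y]}(z)$. So the whole identity collapses to the operator equality $[\ad_x,\ad_y]=\ad_{[x,y]}$ of Proposition~\ref{commutator}, now evaluated at the element $z$.

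The one point that needs genuine care — and the step I expect to be the main obstacle — is that Proposition~\ref{commutator} provides this equality only as maps $L\to L$, whereas here the operators are applied to $z\in L^{\otimes(n-1)}$, a strictly higher tensor power as soon as $n\geq 3$. To bridge the gap I would argue by derivation extension: both $[\ad_x,\ad_y]$ and $\ad_{[x,y]}$ are derivations of the tensor algebra $T(L)$ — the commutator of two derivations is again a derivation, and $\ad_{[x,y]}$ is one by construction — and a derivation of $T(L)$ is determined by its restriction to the generating submodule $L$. Since these restrictions agree by Proposition~\ref{commutator}, the two derivations coincide on all of $T(L)$, in particular on the summand $L^{\otimes(n-1)}$, which is exactly what the Leibniz identity requires. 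For $n=2$ no such extension is needed, since then $L^{\otimes(n-1)}=L$ and Proposition~\ref{commutator} applies verbatim, recovering the ordinary Leibniz identity.
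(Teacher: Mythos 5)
Your argument is correct. Note that the paper does not actually prove this statement: it is quoted from Daletskii--Takhtajan with the proof omitted, so there is nothing to compare against line by line; your write-up supplies the standard argument one would give. The two essential points are exactly the ones you isolate: the reduction of the Leibniz identity to the operator identity $[\ad_x,\ad_y]=\ad_{[x,y]}$, and the observation that Proposition~\ref{commutator} only gives this on $L$ while you need it on $L^{\otimes(n-1)}$, which you correctly bridge by noting that both sides are derivations of $T(L)$ agreeing on the generating submodule $L$ (this also uses that $\ad_{[x,y]}$ makes sense because $[x,y]=\ad_x(y)$ lands back in $L^{\otimes(n-1)}$, which you check). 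One small remark on conventions: the identity you verify, $[x,[y,z]]=[[x,y],z]+[y,[x,z]]$, is the \emph{left} Leibniz identity, and that is indeed the one satisfied by $[x,y]=\ad_x(y)$; the paper's own specialisation of identity~\eqref{fundamental} to $n=2$ yields the \emph{right} Leibniz identity, which this bracket does not satisfy in general. This is an imprecision in the paper's conventions rather than a gap in your proof, but it is worth being aware that the theorem must be read with the left convention (or with the opposite bracket $\ad_y(x)$).
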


This algebra is called the \defn{basic Leibniz algebra} associated to an 
$n$-Lie algebra~$L$. We denote it by $\BLb_{n-1}^{\otimes} (L)$. 

Following~\cite{DaTa} let us write $\mathcal{K}_{n-1}=\Span\{ x \in L^{\otimes(n-1)} \mid \ad_x=0 \}$ for the kernel of the adjoint action. We recall the following result from~\cite{DaTa}.

\begin{Th}\label{TahThm1}
The subspace $\mathcal{K}_{n-1}$ is an ideal of $\BLb_{n-1}^{\otimes}(L)$ 
and the quotient algebra $\BLb_{n-1}^{\otimes}(L)/\mathcal{K}_{n-1}$ is a Lie algebra.\noproof
\end{Th}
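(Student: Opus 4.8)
The plan is to derive both assertions directly from Proposition~\ref{commutator}, by regarding the adjoint action as a homomorphism of Leibniz algebras. Consider the $\kk$-linear map
\[
\ad\colon \BLb_{n-1}^{\otimes}(L)\to \End(L)\colon x\mapsto \ad_x,
\]
where $\End(L)$ is equipped with the commutator bracket, so that it is a Lie algebra and in particular a Leibniz algebra. Proposition~\ref{commutator} says exactly that $\ad_{[x,y]}=[\ad_x,\ad_y]$, i.e.\ that $\ad$ preserves brackets, so $\ad$ is a homomorphism of Leibniz algebras. By its very definition, $\mathcal{K}_{n-1}=\Ker(\ad)$.

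First I would note that $\mathcal{K}_{n-1}$ is a two-sided ideal, since the kernel of a homomorphism of Leibniz algebras always is. Explicitly, for $k\in\mathcal{K}_{n-1}$ (so that $\ad_k=0$) and arbitrary $y\in L^{\otimes(n-1)}$, Proposition~\ref{commutator} gives $\ad_{[k,y]}=[\ad_k,\ad_y]=0$ and $\ad_{[y,k]}=[\ad_y,\ad_k]=0$, whence both $[k,y]$ and $[y,k]$ lie in $\mathcal{K}_{n-1}$.

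Next, to identify the quotient I would invoke the first isomorphism theorem for Leibniz algebras, which yields $\BLb_{n-1}^{\otimes}(L)/\mathcal{K}_{n-1}\cong\Ima(\ad)$. The image is the $\kk$-span of the inner derivations $\ad_x$, and by Proposition~\ref{commutator} it is closed under the commutator bracket, hence a Lie subalgebra of $\End(L)$. Since any subalgebra of a Lie algebra is again a Lie algebra, the quotient is a Lie algebra.

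Should one prefer to argue on the quotient directly rather than through the image, the point is that the induced bracket is alternating, which turns the Leibniz identity into the Jacobi identity. For this it is enough that $[x,x]\in\mathcal{K}_{n-1}$ for every $x$, and indeed $\ad_{[x,x]}=[\ad_x,\ad_x]=0$; alternatively, from $\ad_{[x,y]}=\ad_{x\circ y}$ and the skew-symmetry of $\circ$ one sees that the quotient bracket coincides with the image of $x\circ y$ and is therefore skew-symmetric. I do not expect a genuine obstacle here: once Proposition~\ref{commutator} is available, everything reduces to the formal fact that the kernel of a Leibniz homomorphism into a Lie algebra is an ideal whose quotient is a Lie algebra. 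The only steps deserving a word of care are checking that the commutator bracket really makes $\ad$ a Leibniz homomorphism (the content of Proposition~\ref{commutator}) and that one works throughout with the two-sided notion of ideal appropriate to Leibniz algebras.
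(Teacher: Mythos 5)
Your argument is correct. The paper itself states this theorem without proof (it is recalled from~\cite{DaTa}), so there is no internal proof to compare against; but your route --- reading Proposition~\ref{commutator} as saying that $x\mapsto\ad_x$ is a Leibniz algebra homomorphism into $(\End(L),[-,-])$ with kernel $\mathcal{K}_{n-1}$, so that the quotient is isomorphic to a Lie subalgebra of $\End(L)$ --- is precisely the standard argument, and it is the same point of view the paper itself adopts later in the proof of Proposition~\ref{inner}.
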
 

This Lie algebra was introduced in~\cite{DaKu} and called the \defn{basic Lie algebra} of the given $n$-Lie algebra $L$.

\section{Algebras associated to an $n$-Lie algebra}\label{S:Constr}

Given an $n$-Lie algebra $L$ over the complex numbers $\cc$, 
in the article~\cite{Bava} the authors consider the algebra $(\Lambda^{n-1}L, \circ)$.
In Proposition~1 of~\cite{Bava}, this product $\circ$ is claimed to satisfy the Jacobi identity. 
However, this cannot be correct, as we may see in the following example of a $3$-Lie algebra, 
which is a member of the class of so-called \emph{filiform} $3$-Lie algebras given in~\cite{GGR}. 
For the sake of simplicity let us denote 
$\J(a,b,c) = a\circ (b\circ c) + c\circ(a\circ b) + b\circ( c\circ a)$. 

\begin{Ex}
Consider the $3$-Lie algebra with basis $\{x_1,x_2,x_3,x_4,x_5\}$ and the table of multiplication determined by
\begin{align*}
[x_1 , x_2 ,x_ 3 ] = x_4, \qquad
[x_1 ,x_2 ,x_4 ] = [x_1 ,x_3 ,x_4 ] = [x_2 ,x_3 ,x_4 ] = x_5.
\end{align*}
Then $\J(x_1\wedge x_4, x_1\wedge x_2, x_3\wedge x_2)=-\tfrac14 x_4\wedge x_5 \neq 0$.
\end{Ex} 

In order to determine when the algebra $(\Lambda^{n-1}L, \circ)$ defined in~\cite{Bava} is actually a Lie algebra, let us have a look at the terms of Jacobi identity.

\begin{Pro}
Let $L$ be an $n$-Lie algebra. Then for any $a$, $b$, $c\in \Lambda^{n-1}L$ the following equality holds:
\begin{equation}\label{jacobi}\tag{$\dagger$}
\J(a,b,c)=-\tfrac14\bigl( [\ad_b,\ad_c](a)+[\ad_a,\ad_b](c)+[\ad_c,\ad_a](b) \bigr)
\end{equation}	
\end{Pro}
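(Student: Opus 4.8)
The plan is to expand everything in terms of the adjoint action and reduce to Proposition~\ref{commutator}, which is the only nontrivial ingredient. Abbreviating the right-hand side by setting $S = [\ad_b,\ad_c](a) + [\ad_a,\ad_b](c) + [\ad_c,\ad_a](b)$, so that the goal is $\J(a,b,c) = -\tfrac14 S$, I would first unfold each of the three summands of $\J(a,b,c)$ using the very definition $u\circ v = \tfrac12\bigl(\ad_u(v) - \ad_v(u)\bigr)$. For instance $a\circ(b\circ c) = \tfrac12\bigl(\ad_a(b\circ c) - \ad_{b\circ c}(a)\bigr)$, and cyclically for $c\circ(a\circ b)$ and $b\circ(c\circ a)$. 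This separates $\J(a,b,c)$ into two pieces: an \emph{outer} group $T = \ad_a(b\circ c) + \ad_c(a\circ b) + \ad_b(c\circ a)$ and a \emph{commutator} group $-\bigl(\ad_{b\circ c}(a) + \ad_{a\circ b}(c) + \ad_{c\circ a}(b)\bigr)$, giving $\J(a,b,c) = \tfrac12 T - \tfrac12\bigl(\ad_{b\circ c}(a) + \ad_{a\circ b}(c) + \ad_{c\circ a}(b)\bigr)$.

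The commutator group is immediate: Proposition~\ref{commutator} gives $\ad_{u\circ v} = [\ad_u,\ad_v]$, whence $\ad_{b\circ c}(a) = [\ad_b,\ad_c](a)$ and likewise for the other two terms, so this piece is exactly $-S$. It then remains to show that the outer group satisfies $T = \tfrac12 S$. For this I would expand the inner $\circ$ once more, e.g. $\ad_a(b\circ c) = \tfrac12\bigl(\ad_a\ad_b(c) - \ad_a\ad_c(b)\bigr)$, and cyclically. This produces six double-adjoint terms, which I reorganise according to the element being acted upon: the two terms ending in $(c)$ combine to $[\ad_a,\ad_b](c)$, the two ending in $(b)$ to $[\ad_c,\ad_a](b)$, and the two ending in $(a)$ to $[\ad_b,\ad_c](a)$, so that indeed $T = \tfrac12 S$.

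Combining the two computations yields $\J(a,b,c) = \tfrac12 T - \tfrac12 S = \tfrac12\cdot\tfrac12 S - \tfrac12 S = -\tfrac14 S$, which is precisely~\eqref{jacobi}. The whole argument is a direct (if slightly tedious) calculation, and there is no genuine obstacle; the only points requiring care are the sign and index bookkeeping in the reorganisation of the outer group, and the correct matching of the pair $(u,v)$ when applying Proposition~\ref{commutator}. Finally, everything descends to $\Lambda^{n-1}L$, since both $\circ$ and each $\ad_x$ are alternating in the entries of $x$.
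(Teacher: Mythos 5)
Your proof is correct and follows essentially the same route as the paper's: expand each summand of $\J(a,b,c)$ via the definition of $\circ$, convert $\ad_{u\circ v}$ into $[\ad_u,\ad_v]$ using Proposition~\ref{commutator}, and regroup the remaining six double-adjoint terms into commutators. The only difference is organisational---you collect the outer and commutator pieces globally into $T$ and $S$ rather than computing term by term as the paper does---and your sign and coefficient bookkeeping checks out.
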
 

\begin{proof}
By Proposition~\ref{commutator} we obtain
\begin{align*}
a\circ \bigl(b\circ c)=& \tfrac12(\ad_a(b\circ c) -\ad_{b\circ c}(a)\bigr) \\
=&\tfrac12\left(\ad_a(\tfrac12\bigl(\ad_b (c) -\ad_c (b)\bigr)\bigr)\right)-\tfrac12\ad_{b\circ c}(a)\\
=& \tfrac14 \ad_a\bigl( \ad_b(c)\bigr)-\tfrac14\ad_a\bigl( \ad_c(b)\bigr)-\tfrac12[\ad_b,\ad_c](a).
\end{align*}
After similar calculations for the other terms, equality~\eqref{jacobi} follows.
\end{proof}

\begin{Co}\label{Abelian} 
Let $L$ be an $n$-Lie algebra with abelian $\Innder(L)$. Then $(\Lambda^{n-1}L, \circ)$ is a Lie algebra.\noproof
\end{Co}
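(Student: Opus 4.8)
The plan is to reduce the statement to the vanishing of the Jacobiator $\J$, which has already been computed in equation~\eqref{jacobi}. Since $\circ$ is skew-symmetric (as observed just before Proposition~\ref{commutator}), the pair $(\Lambda^{n-1}L,\circ)$ is a Lie algebra precisely when $\J(a,b,c)=0$ for all $a$, $b$, $c\in\Lambda^{n-1}L$. Hence the only thing left to check is that the right-hand side of~\eqref{jacobi} vanishes under the hypothesis that $\Innder(L)$ is abelian.

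First I would record that for each $a\in\Lambda^{n-1}L$ the map $\ad_a$ is an inner derivation, so that $\ad_a\in\Innder(L)$; this is exactly the place where the alternating character of the bracket is used, since it is what allows $\ad$ to descend from $L^{\otimes(n-1)}$ to $\Lambda^{n-1}L$ and makes $\circ$ well defined on the exterior power. Once $\ad_a$, $\ad_b$, $\ad_c$ are identified as elements of $\Innder(L)$, the assumption that $\Innder(L)$ is abelian as a Lie algebra forces each of the commutators $[\ad_b,\ad_c]$, $[\ad_a,\ad_b]$ and $[\ad_c,\ad_a]$ appearing in~\eqref{jacobi} to be the zero derivation. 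Applying the zero map to $a$, $c$ and $b$ respectively, every summand on the right-hand side of~\eqref{jacobi} is zero, whence $\J(a,b,c)=0$.

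Combining this vanishing with the skew-symmetry of $\circ$ shows that $(\Lambda^{n-1}L,\circ)$ satisfies both defining identities of a Lie algebra, which is the claim. I do not expect any genuine obstacle: the entire content is carried by the previously established identity~\eqref{jacobi}, and the hypothesis on $\Innder(L)$ is precisely what is needed to annihilate its right-hand side. The only point deserving a moment's attention is the well-definedness of $\circ$ on $\Lambda^{n-1}L$ together with the identification of $\ad_a$ as an element of $\Innder(L)$, both of which are consequences of the alternating nature of the $n$-Lie bracket.
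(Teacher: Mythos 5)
Your proposal is correct and is exactly the argument the paper intends: the corollary carries a \verb|\noproof| marker precisely because it follows immediately from identity~\eqref{jacobi}, whose right-hand side vanishes term by term when $\Innder(L)$ is abelian, and from the skew-symmetry of $\circ$ noted in Section~\ref{S:prel}. Your extra remark on the well-definedness of $\circ$ and $\ad$ on $\Lambda^{n-1}L$ is a reasonable point of care but adds nothing beyond what the paper has already set up.
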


\begin{Rem}
Corollary~\ref{Abelian} provides us with a sufficient condition. Due to the results in~\cite{PoSa}, for $n=3$ it also seems to be necessary. Indeed, while considering a more general question, in that work a similar product appears. Now given a free 
skew-symmetric ternary algebra $(F,[-,-,-])$, the authors of~\cite{PoSa} consider $F\wedge F$ 
equipped with the product $x\cdot y=\ad_y(x)-\ad_x(y)=-2(x\circ y)$. Observe that 
$(F\wedge F,\cdot)$ is a Lie algebra if and only if $(F\wedge F, \circ)$ is a Lie algebra. 
\end{Rem}

\begin{Rem} \label{Pojidaev}
	It is claimed in \cite[Theorem 3.1]{PoSa} that if $I$ is a non-zero minimal ideal of $F$ such that quotient $F/I \wedge F/I$
	is a Lie algebra then
	\[
	I=\langle [[x_1,x_2,x_3],x_4,x_5]-[[x_1,x_4,x_5],x_2,x_3]\mid x_{i}\in F\rangle.
	\]
	However, this result cannot be correct. Indeed, consider the central extension $F=\mathbb{C}z\oplus V_3$ of the simple 3-Lie algebra $V_3$ over $\mathbb{C}$ from Example~\ref{simple}. We have $I=\langle [\ad_x,\ad_y](a) \mid \text{$a\in F$, $x$, $y\in F\wedge F$} \rangle \subseteq V_3$ so that $I=V_3$ because $V_{3}$ is simple. Set $a=e_2\wedge e_4$, $ b=e_1\wedge e_2$, $c=e_1\wedge z$ and observe that $\ad_c=0$, $[\ad_a,\ad_b](e_1)=e_4$, which yields $\J(a,b,c)= z \wedge e_4\neq 0$. Hence $(F\wedge F,\cdot)$ is not a Lie algebra. However, by \cite[Corollary 1.2.4]{Bal}, $(V_3\wedge V_3, \circ)$ is indeed a Lie algebra (isomorphic to $ \mathfrak{so}_4$). As a consequence, $Z(F)=\mathbb{C}z$ is another minimal ideal with the property that the quotient algebra is a Lie algebra.
\end{Rem}	

It follows that the condition of Corollary~\ref{Abelian} is sufficient, but not necessary. 
A~precise characterisation seems hard to find, but we have the following partial result.

\begin{Pro}\label{2dim}
	Let $\kk$ be a field and let $L$ be a $3$-Lie algebra over $\kk$ such that $\Innder(L)$ is not abelian. If $\dim Z(L)\geq 2$ then $(\Lambda^{n-1} L, \circ)$ is not a Lie algebra.
\end{Pro}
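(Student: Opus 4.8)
The plan is to exhibit a single triple $(a,b,c)$ of elements of $\Lambda^{2}L$ (recall $n=3$, so $\Lambda^{n-1}L=\Lambda^{2}L$) for which the Jacobian $\J(a,b,c)$ is nonzero. By formula~\eqref{jacobi} this amounts to showing that
\[
[\ad_b,\ad_c](a)+[\ad_a,\ad_b](c)+[\ad_c,\ad_a](b)\neq 0,
\]
and since $\circ$ is bilinear and skew-symmetric, the failure of this Jacobi expression is precisely the failure of the Jacobi identity, so $(\Lambda^{2}L,\circ)$ will fail to be a Lie algebra. The guiding idea, already visible in Remark~\ref{Pojidaev}, is to place a central element in one of the three slots: its adjoint action vanishes, which collapses~\eqref{jacobi} to a single surviving term.

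First I would use the hypothesis that $\Innder(L)$ is not abelian to pick $a$, $b\in\Lambda^{2}L$ with $[\ad_a,\ad_b]\neq 0$, and then an element $p\in L$ with $w:=[\ad_a,\ad_b](p)\neq 0$. Next I would bring in the hypothesis $\dim Z(L)\geq 2$: since $\spa\{w\}$ is one-dimensional and $Z(L)$ is not, the centre cannot be contained in $\spa\{w\}$, so there is a central element $z\in Z(L)$ with $w$ and $z$ linearly independent, i.e.\ $w\wedge z\neq 0$. Setting $c:=p\wedge z$, the centrality of $z$ gives $\ad_c=0$, because any $3$-bracket containing $z$ vanishes by the alternating property together with $[-,-,z]=0$. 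Hence $[\ad_b,\ad_c]=0=[\ad_c,\ad_a]$, and only the middle term of~\eqref{jacobi} survives.

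The remaining computation is to evaluate $[\ad_a,\ad_b]$ on $c=p\wedge z$. Viewing $[\ad_a,\ad_b]=\ad_{a\circ b}$ (Proposition~\ref{commutator}) as a derivation of $T(L)$, I obtain
\[
[\ad_a,\ad_b](p\wedge z)=[\ad_a,\ad_b](p)\wedge z + p\wedge[\ad_a,\ad_b](z)=w\wedge z,
\]
the second summand vanishing because $z$ is central. Therefore $\J(a,b,c)=-\tfrac14\, w\wedge z\neq 0$, which is exactly what is needed.

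The only genuinely delicate point is the use of the dimension hypothesis, and this is where I expect the main obstacle to lie: the bound $\dim Z(L)\geq 2$ (and the field hypothesis, which underlies the linear-algebra step) is precisely what guarantees a central direction transverse to the fixed vector $w$, so that the wedge $w\wedge z$ does not collapse. If $\dim Z(L)$ were $1$, the unique central line might be spanned by $w$ and the argument would break down. Everything else is the routine collapse of~\eqref{jacobi} afforded by the central slot.
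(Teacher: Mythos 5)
Your proof is correct, and it rests on the same key idea as the paper's own argument: wedge a central element into one of the three slots so that $\ad_c=0$, which kills two of the three terms of \eqref{jacobi}, and then use $\dim Z(L)\geq 2$ to guarantee that the surviving term $[\ad_a,\ad_b](c)=w\wedge z$ does not collapse in $\Lambda^2L$. The difference is one of organisation, and it works in your favour. The paper fixes a central element $z_1$ \emph{first} and then asks whether some $z_2$ satisfies $\ad_{[x,y]}(z_2)\notin\Span\{z_1\}$; this forces a two-case analysis, with the second case (where $\ad_{[x,y]}(L)=\Span\{z_1\}$) being the only place the second central direction is actually invoked. By instead choosing $p$ and $w=[\ad_a,\ad_b](p)\neq 0$ first and only then picking $z\in Z(L)\setminus\Span\{w\}$ --- which exists precisely because a space of dimension at least $2$ cannot sit inside the line $\Span\{w\}$ --- you obtain a single uniform argument with no case split. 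The supporting steps you use (that $[\ad_a,\ad_b]=\ad_{a\circ b}$ acts as a derivation on $\Lambda^2L$, and that it annihilates central elements) are exactly those provided by Proposition~\ref{commutator} and the extension of inner derivations to $T(L)$, so nothing is missing.
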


\begin{proof} 
	By assumption there are some $x$, $y \in L\wedge L$ with 
$[\ad_x,\ad_y]=\ad_{[x,y]}\neq 0$. Pick an element $z_1\in Z(L)$ and, if possible, take $z_2 \in L$ such that $\ad_{[x,y]}(z_2) \notin \Span \{ z_1 \}$. In other words, $ z_1\wedge\ad_{[x,y]}(z_2)\neq 0$. 
Putting $z=z_1\wedge z_2$ yields $\ad_z(L)=0$ and 
$\ad_{[y,z]}(x)+\ad_{[z,x]}(y) =0$. 
However, $[\ad_x,\ad_y](z) = z_1\wedge\ad_{[x,y]}(z_2) \neq 0$
and thus the Jacobi identity does not hold.

	If such a $z_2$ does not exist, then let us assume that $\ad_{[x,y]}(L) =\Span\{ z_1 \}$.
	In this case, pick $z_3\in Z(L)$ linearly independent from $z_1$. 
	Choose a $z_4\in L$ such that $\ad_{[x,y]}(z_4)=z_1$ and consider $z=z_3\wedge z_4$. 
	Obviously, $z\neq 0$ and $-4\J(x,y,z)= z_3\wedge z_1 $ which is not zero.
\end{proof}

In the remaining cases it is not clear whether $(\Lambda^{n-1}L, \circ)$ is a Lie algebra or not.

\subsection{The basic Leibniz algebra $\BLb_{n-1}^{\Lambda}(L)$}
It is hard to endow $\Lambda^{n-1}L$ with a Lie algebra structure but 
it inherits a Leibniz algebra structure from the basic Leibniz algebra of~\cite{DaTa}.
Consider the subspace
\[
\mathcal{W}_{n-1} = \Span\{x_1\otimes \dots \otimes x_{n-1} \mid \text{$x_i=x_j$ for some $1\leq i <j \leq n-1$}\}
\]
of $L^{\otimes(n-1)}$. 

\begin{Pro}\label{extleib} 
Let $L$ be an $n$-Lie algebra. Then $\mathcal{W}_{n-1}$ is an ideal of $\BLb_{n-1}^{\otimes}(L)$ and $\Lambda^{n-1}L = \BLb_{n-1}^{\otimes}(L) / \mathcal{W}_{n-1}$.
\end{Pro}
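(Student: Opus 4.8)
The plan is to treat the two assertions separately, noting that the identification $\Lambda^{n-1}L = \BLb_{n-1}^{\otimes}(L)/\mathcal{W}_{n-1}$ is almost formal: by construction $\mathcal{W}_{n-1}$ is precisely the $\kk$-submodule of $L^{\otimes(n-1)}$ whose quotient defines the exterior power, so the only content of the statement is that this submodule is a (two-sided) ideal of the Leibniz algebra $\BLb_{n-1}^{\otimes}(L)$. Once that is established, the induced bracket on the quotient $\kk$-module $\Lambda^{n-1}L$ is automatically a Leibniz bracket, and this is the algebra the subsection will call $\BLb_{n-1}^{\Lambda}(L)$. Thus everything reduces to verifying the two inclusions $[\mathcal{W}_{n-1}, L^{\otimes(n-1)}] \subseteq \mathcal{W}_{n-1}$ and $[L^{\otimes(n-1)}, \mathcal{W}_{n-1}] \subseteq \mathcal{W}_{n-1}$ for the bracket $[x,y] = \ad_x(y)$.

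The first inclusion is immediate from the alternating property of the $n$-ary bracket. It suffices to treat a generator $w = w_1 \otimes \dots \otimes w_{n-1}$ of $\mathcal{W}_{n-1}$, so $w_i = w_j$ for some $i < j$. Then $\ad_w = \ad(w_1, \dots, w_{n-1})$ sends $b$ to $[w_1, \dots, w_{n-1}, b]$, which vanishes identically because the bracket is alternating and two of its arguments coincide. Hence $\ad_w = 0$ already on $L$, so its derivation extension to $T(L)$ is zero as well, and $[w, y] = \ad_w(y) = 0 \in \mathcal{W}_{n-1}$ for every $y$. In particular $[\mathcal{W}_{n-1}, L^{\otimes(n-1)}] = 0$.

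The second inclusion is the real point. Fix $y \in L^{\otimes(n-1)}$ and again a generator $w$ with $w_i = w_j$. Since $\ad_y$ acts as a derivation, $[y, w] = \ad_y(w) = \sum_{k=1}^{n-1} w_1 \otimes \dots \otimes \ad_y(w_k) \otimes \dots \otimes w_{n-1}$. Every term with $k \notin \{i, j\}$ still carries the repeated pair in positions $i$ and $j$, hence lies in $\mathcal{W}_{n-1}$. The obstacle is that the two remaining terms, for $k = i$ and $k = j$, are not individually in $\mathcal{W}_{n-1}$: the derivation has destroyed one copy of the repeated entry. The idea is to add them. Writing $f \colon L \times L \to L^{\otimes(n-1)}$ for the bilinear map that inserts its two arguments in positions $i$ and $j$ while keeping the other coordinates equal to those of $w$, these two terms are $f(\ad_y(w_i), w_i)$ and $f(w_i, \ad_y(w_i))$, and the polarization identity $f(a,b) + f(b,a) = f(a+b, a+b) - f(a,a) - f(b,b)$ exhibits their sum as a combination of tensors each having a repeated entry in positions $i$ and $j$, hence as an element of $\mathcal{W}_{n-1}$. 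Therefore $\ad_y(w) \in \mathcal{W}_{n-1}$, which gives the second inclusion.

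Combining the two inclusions shows that $\mathcal{W}_{n-1}$ is a two-sided ideal, so the quotient Leibniz algebra exists and has underlying module $\Lambda^{n-1}L$, as claimed. I expect the only delicate step to be the second inclusion, precisely the pairing-up of the two terms via polarization; it is worth stressing that this argument uses no division and so is valid over an arbitrary commutative ring $\kk$, which matters since $2$ need not be invertible.
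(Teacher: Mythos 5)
Your proof is correct and follows essentially the same route as the paper: the left ideal condition is immediate since $\ad_w=0$ whenever $w$ has a repeated entry, and the right ideal condition is checked by letting $\ad_v$ act as a derivation and pairing the two terms where the derivation hits one of the two equal coordinates. The only difference is cosmetic: where the paper simply asserts that the sum of those two terms lies in $\mathcal{W}_{n-1}$, you justify it explicitly via the polarization identity, which also makes clear that no division by $2$ is needed.
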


\begin{proof}
First, note that for any $w\in \mathcal{W}_{n-1}$ and $v \in L^{\otimes(n-1)}$ we have $[w,v]=\ad_w(v)=0$, so that $\mathcal{W}_{n-1}\subseteq \mathcal{K}_{n-1}$. 
For $w=x_1\otimes \dots \otimes x_{n-1}\in \mathcal{W}_{n-1}$, where $x_i=x_j$ 
for some $1\leq i <j\leq n-1$ and $v \in L^{\otimes(n-1)}$ we have
\begin{align*}
[v,w]=\ad_v(w)= & [x_1,\dots, \ad_v(x_i),\dots, x_j, \dots, x_{n-1}] \\ 
{} + & [x_1,\dots, x_i,\dots, \ad_v(x_j), \dots, x_{n-1}]\\
 {} + & \sum_{k\neq i, k \neq j} [x_1,\dots, \ad_v(x_k),\dots, x_{n-1}] \in \mathcal{W}_{n-1}
\end{align*} 
since the sum of the first two terms and every summand in the sum belongs to $\mathcal{W}_{n-1}$.
Hence $\mathcal{W}_{n-1}$ is an ideal of the Leibniz algebra $\BLb_{n-1}^{\otimes}(L)$ and 
we may conclude that $\Lambda^{n-1}L= \BLb_{n-1}^{\otimes}(L)/\mathcal{W}_{n-1}$.
\end{proof}

Let us denote this Leibniz algebra 
$\left(\Lambda^{n-1}L,[-,-]\right)$ by $\BLb_{n-1}^{\Lambda}(L)$. 
The basic Lie algebra $\BLb_{n-1}^{\otimes}(L)/\mathcal{K}_{n-1}$ is 
a subalgebra of the Leibniz algebra $\BLb_{n-1}^{\Lambda}(L)$. 

\begin{Rem}\label{Rem1}
In~\cite{Dzh}, given an $n$-Lie algebra $L$, the vector space $\Lambda^{n-1}L$ 
is equipped with a product $[x,y]=R_x(y)=(-1)^{n-1} \ad_x(y)$. This algebra coincides with $\BLb_{n-1}^{\Lambda}(L)$ up to a sign $(-1)^{n-1}$ in the multiplications.
This product is not skew-symmetric as shown in~\cite[Remark 1.1.16]{Bal}. 
\end{Rem}

\begin{Pro}\label{inner}
$\BLb_{n-1}^{\Lambda}(L)\cong \Innder(L)$ if and only if $\mathcal{K}_{n-1} = \mathcal{W}_{n-1}$.
\end{Pro}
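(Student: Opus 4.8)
The plan is to manufacture a single canonical surjective homomorphism from $\BLb_{n-1}^{\Lambda}(L)$ onto $\Innder(L)$ whose kernel is exactly $\mathcal{K}_{n-1}/\mathcal{W}_{n-1}$, and then read off both implications from the (non)vanishing of that kernel. First I would assemble the adjoint action into one linear map $\ad\colon \BLb_{n-1}^{\otimes}(L)=L^{\otimes(n-1)}\to\Innder(L)$, $x\mapsto\ad_x$. By the very definition of $\Innder(L)$ as the span of the inner derivations this map is surjective; by Proposition~\ref{commutator} it satisfies $\ad_{[x,y]}=[\ad_x,\ad_y]$, so it is a homomorphism from the Leibniz algebra $\BLb_{n-1}^{\otimes}(L)$ to the Lie algebra $\Innder(L)$; and its kernel is, by definition, $\mathcal{K}_{n-1}$. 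Hence $\ad$ induces an isomorphism $\BLb_{n-1}^{\otimes}(L)/\mathcal{K}_{n-1}\cong\Innder(L)$, identifying the target with the basic Lie algebra of Theorem~\ref{TahThm1}.

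Next I would invoke the inclusion $\mathcal{W}_{n-1}\subseteq\mathcal{K}_{n-1}$ recorded in the proof of Proposition~\ref{extleib}. Since $\mathcal{W}_{n-1}$ lies in the kernel of $\ad$, the map factors through the quotient $\BLb_{n-1}^{\otimes}(L)/\mathcal{W}_{n-1}=\BLb_{n-1}^{\Lambda}(L)$ of Proposition~\ref{extleib}, yielding a surjective Leibniz homomorphism $\overline{\ad}\colon\BLb_{n-1}^{\Lambda}(L)\to\Innder(L)$ with kernel $\mathcal{K}_{n-1}/\mathcal{W}_{n-1}$. The sufficiency is then immediate: if $\mathcal{K}_{n-1}=\mathcal{W}_{n-1}$ the kernel is trivial, so $\overline{\ad}$ is an isomorphism. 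I expect the intended reading of the statement is through this canonical map, in which case ``$\overline{\ad}$ is an isomorphism $\iff$ its kernel $\mathcal{K}_{n-1}/\mathcal{W}_{n-1}$ is zero $\iff$ $\mathcal{K}_{n-1}=\mathcal{W}_{n-1}$'' settles both directions simultaneously.

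For the necessity direction phrased with an \emph{abstract} isomorphism I would argue that $\overline{\ad}$ exhibits $\Innder(L)$ as a quotient of $\BLb_{n-1}^{\Lambda}(L)$ by $\mathcal{K}_{n-1}/\mathcal{W}_{n-1}$, so an isomorphism between the two forces this quotient to be trivial. When $L$ is finite-dimensional over a field this is just the count $\dim\BLb_{n-1}^{\Lambda}(L)=\dim\Innder(L)+\dim(\mathcal{K}_{n-1}/\mathcal{W}_{n-1})$, which forces the last term to vanish and hence $\mathcal{K}_{n-1}=\mathcal{W}_{n-1}$.

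The main obstacle is exactly this necessity direction in full generality: over an arbitrary commutative ring $\kk$, or for infinite-dimensional $L$, a $\kk$-module can be abstractly isomorphic to a proper quotient of itself, so the conclusion cannot rest on the mere existence of \emph{some} isomorphism. The argument must therefore route through the canonical surjection $\overline{\ad}$ (whose kernel we can identify) together with a finiteness input---finite dimension over a field, or a length/rank hypothesis---rather than through an opaque abstract isomorphism. This is the only place where the proof is not purely formal, and I would flag the hypotheses under which it is being run.
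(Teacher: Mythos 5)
Your proof follows the same route as the paper: the canonical surjection $x\mapsto \ad_x$ from $\BLb_{n-1}^{\Lambda}(L)$ onto $\Innder(L)$, well defined because the bracket is alternating, a Leibniz homomorphism by Proposition~\ref{commutator}, and with kernel $\mathcal{K}_{n-1}/\mathcal{W}_{n-1}$, so that triviality of the kernel gives the isomorphism. You are in fact more careful than the paper, whose proof only spells out the ``if'' direction; your observation that the ``only if'' direction is delicate for an \emph{abstract} isomorphism (and needs either the canonical map reading or a finiteness hypothesis) is a legitimate point that the paper leaves implicit.
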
 
\begin{proof}
Due to Proposition~\ref{commutator} and the $n$-Lie structure of $L$, 
the map
\[
x=x_2\wedge\dots\wedge x_n \mapsto \ad_x
\]
is a well-defined 
surjective Leibniz algebra homomorphism of $\BLb_{n-1}^{\Lambda}(L)$ onto 
$\Innder(L)$. Now if the kernel of this map is zero, which means 
$\ad_{x}\neq 0$ for all $x\neq 0$, then this map is an isomorphism. 
\end{proof} 

\begin{Rem}\label{notnec} 
Consider the simple $n$-Lie algebra $V_n$ over $\cc$ of Example~\ref{simple}. 
It is easily seen that $\mathcal{K}_{n-1}=\mathcal{W}_{n-1}$, so we have an isomorphism 
\[
\BLb_{n-1}^{\otimes}(V_n)/ \mathcal{K}_{n-1} =\BLb_{n-1}^{\Lambda}(V_n)\cong\Innder(V_n).
\]
Moreover, by skew-symmetry of the bracket we have $x\circ y=[x,y]$ and therefore 
$(\Lambda^{n-1}L, \circ)$ is the same Lie algebra $\Innder(V_n)$. A different construction 
of the simple $n$-Lie algebra is given in~\cite{Bava} and it is proven 
\cite[Corollary 1.2.4]{Bal} that its basic Lie algebra happens to be 
$\mathfrak{so}_{n+1}$. Hence, the algebras constructed in~\cite{Dzh} and~\cite{Bava} for~$V_n$ 
coincide with the basic Lie algebra~\cite{DaTa}
\[
\BLb_{n-1}^{\otimes}(V_n)/ \mathcal{K}_{n-1}=\BLb_{n-1}^{\Lambda}(V_n)=(\Lambda^{n-1}L, \circ) \cong \Innder(V_n)\cong \mathfrak{so}_{n+1}.
\]

We may conclude that, although the constructions of the papers~\cite{Dzh} and~\cite{Bava} do not work in general, 
their results stay valid for the simple $n$-Lie algebra case.
In~\cite{Dzh} the finite-dimensional, irreducible representation of the 
simple $n$-Lie algebra is studied and in~\cite{Bava} irreducible 
highest weight representations of the same algebra are studied.
\end{Rem}

\section{The associated algebra construction}\label{S:Univ}

\subsection{The category of modules over an $n$-Lie algebra}
Following~\cite{Beck}, given an $n$-Lie algebra $L$ over $\kk$, we say that the category of \defn{$L$-modules} or \defn{$n$-Lie modules over $L$} is $\LMod=\Ab(\nLie\downarrow L)$, the category of abelian group objects in the comma category $(\nLie\downarrow L)$.

This definition may be unpacked as follows: an $L$-module is a $\kk$-module 
$M$ with a structure of $n$-Lie algebra on $M \oplus L$ such that $L$ 
is a subalgebra of $M \oplus L$, $M$ is an ideal of $M \oplus L$, and 
the bracket is zero if two elements are in $M$. A homomorphism 
of $L$-modules $f \colon M \to M'$ is determined by an $n$-Lie algebra 
homomorphism from $M \oplus L$ to $M' \oplus L$ which restricts to the identity on $L$. In the particular case of $n = 2$ we recover the notion of a Lie representation.

This may be further decompressed as follows. An $L$-module is a $\kk$-module 
$M$ with a linear map $[-, \dots, -] \colon (\Lambda^{n-1}L) \t M \to M$ 
satisfying the relations
\begin{multline*}
\big[x_1, \dots, x_{n-1},[y_1, \dots, y_{n-1}, m]\big] - \big[y_1, \dots, y_{n-1},[x_1, \dots, x_{n-1}, m]\big] 
\\= \sum\limits_{i = 1}^{n-1} [y_1, \dots, [x_1, \dots, x_{n-1}, y_i], \dots, y_{n-1}, m]
\end{multline*}
and 
\begin{multline*}
\big[[x_1, \dots, x_n], y_2, \dots, y_{n-1}, m\big] =\\ \sum\limits_{i=1}^{n-1} (-1)^{n-i}\big[x_1, \dots, \widehat{x}_i, \dots, x_n, [x_i, y_2, \dots, y_{n-1}, m] \big],
\end{multline*}
for all $x_i$, $y_i \in L$ and $m \in M$.

\begin{Ex}
The base ring $\kk$ is an $L$-module via the trivial action.
\end{Ex}

\subsection{The associated algebra functor}
For any $n$-Lie algebra $L$, the category $\LMod$ is an abelian variety of algebras. It is well-known that this makes it equivalent to the category of modules over the endomorphism algebra of the free $L$-module on one generator~\cite[page 106]{Freyd}. This process determines a functor $\U \colon {\nLie \to \Ass}$ from the category of $n$-Lie algebras to the category 
of associative unital $\kk$-algebras such that $\LMod$ is equivalent to the 
category $\Mod_{\U(L)}$ of ``standard'' modules over the associative algebra $\U(L)$. The following proposition gives an explicit algebraic description of the functor $\U$.

\begin{Pro}\label{Characterisation U}
Given an $n$-Lie algebra $L$, the algebra $\U(L)$ is the tensor algebra of $\Lambda^{n-1}L$ quotient by the two-sided ideal generated by
\begin{multline*}
(x_1 \wedge \cdots \wedge x_{n-1})(y_1 \wedge \cdots \wedge y_{n-1}) - (y_1 \wedge \cdots \wedge y_{n-1})(x_1 \wedge \cdots \wedge x_{n-1}) 
\\ = \sum_{i=1}^{n-1}y_1 \wedge \cdots \wedge [x_1, \dots, x_{n-1}, y_i] \wedge \cdots \wedge y_{n-1}
\end{multline*}
and
\begin{multline*}
[x_1, \dots, x_n] \wedge y_2 \wedge \cdots \wedge y_{n-1}\\ = \sum_{i=1}^{n}(-1)^{n-i}(x_1 \wedge \cdots \wedge \widehat{x}_i \wedge \cdots \wedge x_n)(x_i \wedge y_2 \wedge \cdots \wedge y_{n-1}),
\end{multline*}
for $x_i$, $y_i \in L$ and $m \in M$.\noproof
\end{Pro}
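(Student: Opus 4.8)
The plan is to read the algebra $\U(L)$ off directly from the explicit description of the category $\LMod$ recorded just above the statement, and then to match the resulting presentation against the abstract construction of $\U$ through endomorphism algebras. Throughout, write $A$ for the tensor algebra $T(\Lambda^{n-1}L)$ modulo the two-sided ideal generated by the two displayed families of elements; the goal is to prove $\U(L)\cong A$.

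First I would observe that, by the decompressed description of $L$-modules, an $L$-module structure on a $\kk$-module $M$ is exactly a linear map $(\Lambda^{n-1}L)\t M\to M$, say $(a,m)\mapsto am$, subject to the two displayed identities. Writing $\rho(a)\in\End_{\kk}(M)$ for the operator $m\mapsto am$, the universal property of the tensor algebra identifies such a linear map with a unital $\kk$-algebra homomorphism $\rho\colon T(\Lambda^{n-1}L)\to\End_{\kk}(M)$. Thus an $L$-module is nothing but a $T(\Lambda^{n-1}L)$-module whose structure map $\rho$ obeys two further constraints.

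Next I would translate those two constraints into relations on $\rho$. Taking $a=x_1\wedge\cdots\wedge x_{n-1}$ and $b=y_1\wedge\cdots\wedge y_{n-1}$, the first module identity reads $\rho(a)\rho(b)-\rho(b)\rho(a)=\rho\bigl(\sum_i y_1\wedge\cdots\wedge[x_1,\dots,x_{n-1},y_i]\wedge\cdots\wedge y_{n-1}\bigr)$ as operators on $M$, while the second asserts the equality of $\rho$ applied to the two sides of the second displayed relation. Since $\rho$ is an algebra map, both identities hold for every $m\in M$ precisely when $\rho$ annihilates the two generating families of the defining ideal; equivalently, $\rho$ factors through the quotient $A$. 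Conversely every $A$-module restricts to such a $\rho$, and an $L$-module homomorphism is exactly a $\kk$-linear map commuting with all $\rho(a)$, that is, an $A$-linear map. Hence this assignment is an isomorphism of categories $\LMod\cong\Mod_{A}$.

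Finally I would reconcile this with the definition of $\U$. By construction $\U(L)$ is the endomorphism algebra of the free $L$-module on one generator, with $\LMod\simeq\Mod_{\U(L)}$ realised by an equivalence sending that free module to the regular module. Under the isomorphism $\LMod\cong\Mod_{A}$ built above, the free $L$-module on one generator corresponds to the regular module $A$, whose endomorphism algebra is $A$ itself; comparing gives $\U(L)\cong A$. The step I expect to be most delicate is this last identification: requirement (U1) alone fixes $\U(L)$ only up to Morita equivalence, so to recover $A$ on the nose one must verify that the isomorphism $\LMod\cong\Mod_{A}$ carries the canonical generator to the regular representation, and must keep careful track of the left/right and opposite-algebra conventions, lest a spurious passage to $A^{\mathrm{op}}$ creep in. The relation-matching of the middle step, in contrast, is a direct substitution once the operator notation $\rho(a)$ is fixed.
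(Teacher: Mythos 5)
Your proposal is correct and follows exactly the route the paper intends: the paper omits the proof (the result is stated with a bare QED), but the surrounding text — the decompressed description of $L$-modules and the remark that the bracket $[x_1,\dots,x_{n-1},m]$ corresponds to the action $(x_1\wedge\cdots\wedge x_{n-1})m$ — is precisely your translation of the two module identities into the two defining relations, followed by the identification of $\U(L)$ with the endomorphism algebra of the free module on one generator. Your flagged concern about the regular representation is handled by noting that the isomorphism $\LMod\cong\Mod_A$ commutes with the forgetful functors to $\kk$-modules, hence with the free functors.
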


Note that when $n = 2$ we obtain the universal enveloping algebra of a Lie algebra. From the point of view of Proposition~\ref{Characterisation U}, the equivalence of categories $\LMod\simeq\Mod_{\U(L)}$ may be recovered by using that the $L$-module bracket $[x_1, \dots, x_{n-1}, m]$ defines a $\U(L)$-module action $(x_1 \wedge \cdots \wedge x_{n-1}) m$ and vice versa.

\begin{Ex}\label{E:free}
Let $L_m$ be the free $n$-Lie algebra on $m = 1$, \dots, $n-2$ generators. 
(An explicit description of the free $n$-Lie algebra can be found in~\cite{Rot}). Then $\U(L_m) = \kk$, 
since the $(n-1)$st exterior product is zero. 

Assume $m = n-1$. Then all brackets are zero and the relations of the associated algebra vanish straightforward. Hence $\U(L_{m})$ is $\kk[X]$, the commutative polynomial ring over $\kk$ with one generator.

If $m \geq n$, we can forget the elements with brackets by the second relation in Proposition~\ref{Characterisation U}. Thus we see that 
\begin{multline*}
(n-2)(x_1 \wedge \cdots \wedge x_{n-1})(y_1 \wedge \cdots \wedge y_{n-1}) + (y_1 \wedge \cdots \wedge y_{n-1})(x_1 \wedge \cdots \wedge x_{n-1}) \\
- \sum_{i=1}^{n-1}\bigg( \sum_{j=1}^{n-1} (-1)^{n-j+i} (x_1 \wedge \cdots \wedge \widehat{x}_j \wedge \cdots \wedge x_{n-1} \wedge y_i)(x_j \wedge y_1 \wedge \cdots \wedge \widehat{y}_i \wedge \cdots \wedge y_{n-1}) \bigg)
\end{multline*}
is zero.

In the case of $m = n$ this relation vanishes.
 Therefore $\U(L_n) \cong \kk \langle X_1, \dots, X_n \rangle$, 
 the non-commutative polynomial ring over $\kk$ in $n$ variables. 
 
If $m > n$ then $\U(L_n)$ is the non-commutative polynomial ring 
 on $\binom{m}{n}$ elements quotient by the two-sided ideal generated by the above relation. 
\end{Ex}

\begin{Ex}
Let $L$ be an abelian $n$-Lie algebra with $n$ generators. Then 
$\U(L) \cong \kk[X_1, \dots, X_{n-1}]$ since the first identity 
of the associated algebra makes it abelian, while the second one
vanishes.
\end{Ex}

When $n=2$ the functor $\U$ is \emph{universal} in the sense that it has a right adjoint. Let us explain why this is not possible for general $n$.

\begin{Le}\label{Lemma not Morita equivalent}
Consider $n>2$. If $F \colon \nLie \to $ $\Ass$ preserves binary sums, then there is an $n$-Lie algebra $L$ for which $F(L)$ is not Morita equivalent to $\U(L)$.
\end{Le}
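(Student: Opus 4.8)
The plan is to exploit the dichotomy between the ``small'' free $n$-Lie algebras, whose associated algebras were just computed in Example~\ref{E:free}, and the way a sum-preserving functor must behave on a coproduct. Recall that in $\nLie$ the binary sum (coproduct) of two copies of a free $n$-Lie algebra on few generators is again free on the disjoint union of the generating sets. So I would take $L_{1}$, the free $n$-Lie algebra on a single generator, and consider the $n$-fold coproduct $L = L_{1} \sqcup \cdots \sqcup L_{1}$ ($n$ copies), which is the free $n$-Lie algebra $L_{n}$ on $n$ generators. By Example~\ref{E:free} we have $\U(L_{1}) = \kk$ (since $n > 2$ forces $m = 1 \le n-2$, the $(n-1)$st exterior power vanishes) while $\U(L_{n}) \cong \kk\langle X_{1}, \dots, X_{n}\rangle$, the free associative algebra on $n$ generators.

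The key step is then the following. If $F$ preserves binary sums, then it preserves the finite sum $L_{n} = L_{1} \sqcup \cdots \sqcup L_{1}$, so $F(L_{n})$ is the coproduct in $\Ass$ of $n$ copies of $F(L_{1})$. Now the crux is to analyse what $F(L_{1})$ can be and to derive a contradiction with Morita equivalence. The coproduct in $\Ass$ of $n$ copies of a fixed unital algebra $A = F(L_{1})$ is the free product $A \ast \cdots \ast A$ amalgamated over $\kk$. I would argue that Morita equivalence $F(L_{n}) \sim \U(L_{n}) = \kk\langle X_{1},\dots,X_{n}\rangle$ is too rigid to be compatible with such a free-product structure for an arbitrary choice of $A$: Morita equivalence preserves the lattice of two-sided ideals, the centre, and in particular whether the algebra is simple, prime, a domain, of finite global dimension, and so on. The free associative algebra $\kk\langle X_{1},\dots,X_{n}\rangle$ is a domain with trivial centre $\kk$ and no nontrivial idempotents, so any $F(L_{n})$ Morita equivalent to it must share the Morita-invariant part of this behaviour (same centre $\kk$, the same ideal lattice, Morita equivalent to a domain hence itself of the form $M_{k}(\text{domain})$ by standard Morita theory).

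The main obstacle, and where the real work lies, is to turn ``$F(L_{n})$ is an $n$-fold free product of $A$ over $\kk$'' into a genuine contradiction. I would proceed by cases on $A = F(L_{1})$. If $A = \kk$, then the free product is $\kk$, which is certainly not Morita equivalent to $\kk\langle X_{1},\dots,X_{n}\rangle$ (their module categories differ, e.g. by global dimension or by the rank of $K_{0}$). If $A \ne \kk$, then already $A \ast A$ maps onto the free product of two nontrivial algebras, which is typically of infinite dimension with a rich idempotent/ideal structure incompatible with being $M_{k}$ of a domain; more robustly, I would compare a single Morita invariant that must match on both sides but cannot, for every possible $A$ simultaneously. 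The cleanest invariant is likely $K_{0}$ or the centre: since the centre of $\kk\langle X_{1},\dots,X_{n}\rangle$ is $\kk$ and the centre is a Morita invariant, I would compute the centre of the free product $A^{\ast n}$ and show it cannot equal $\kk$ unless $A = \kk$, while if $A = \kk$ the whole algebra collapses to $\kk$; either way one lands outside the Morita class of $\kk\langle X_{1}, \dots, X_{n}\rangle$.

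To make this fully rigorous I would record that in $\Ass$ coproducts are unital free products amalgamated over $\kk$, verify that coproducts of free $n$-Lie algebras are free on the union of generators (so $L_{n} = L_{1}^{\sqcup n}$), and invoke the standard fact that Morita equivalent algebras have isomorphic centres and isomorphic $K_{0}$. Assembling these, a sum-preserving $F$ forces $F(L_{n}) = F(L_{1})^{\ast n}$, whose Morita invariants are pinned down by $A = F(L_{1})$ and in no case coincide with those of $\U(L_{n}) = \kk\langle X_{1},\dots,X_{n}\rangle$, giving the desired $n$-Lie algebra $L = L_{n}$ witnessing the claim.
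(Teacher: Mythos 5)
Your overall strategy --- decompose a free $n$-Lie algebra as a coproduct of copies of $L_{1}$, use that $F$ preserves this coproduct, and then separate $F(L)$ from $\U(L)$ by a Morita invariant such as the centre --- is exactly the paper's strategy. But your choice of witness is wrong, and the error is fatal rather than cosmetic. You take $L=L_{n}$, so that $\U(L_{n})\cong\kk\langle X_{1},\dots,X_{n}\rangle$. The trouble is that $\kk\langle X_{1},\dots,X_{n}\rangle$ is \emph{itself} an $n$-fold coproduct in $\Ass$: it is the free product of $n$ copies of the free unital associative algebra on one generator, which is $\kk[X]$. So if $A=F(L_{1})$ happens to be $\kk[X]$, then $F(L_{n})\cong A^{\ast n}\cong\kk\langle X_{1},\dots,X_{n}\rangle\cong\U(L_{n})$ --- the two algebras are isomorphic, not merely Morita equivalent, and no invariant (centre, $K_{0}$, ideal lattice, or anything else) can tell them apart. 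Your concluding claim that the invariants of $A^{\ast n}$ ``in no case coincide with those of $\U(L_{n})$'' is therefore false, and your case analysis on $A$ cannot be completed. (Note also that your centre computation points the wrong way: the centre of $\kk\langle X_{1},\dots,X_{n}\rangle$ is already just $\kk$, which is exactly what a free product typically has, so matching centres is the generic situation here, not the obstruction.)

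The fix is the paper's choice: take $n-1$ copies instead of $n$ (this is where $n>2$ enters, since it guarantees $n-1\geq 2$, so $L_{n-1}$ is a genuine nontrivial coproduct $L_{1}+\cdots+L_{1}$). By Example~\ref{E:free}, $\U(L_{n-1})\cong\kk[X]$ is \emph{commutative}, with centre $\kk[X]$ strictly larger than $\kk$. On the other hand $F(L_{n-1})\cong F(L_{1})+\cdots+F(L_{1})$ is a coproduct of at least two copies of the same algebra $A$: if $A=\kk$ this coproduct is $\kk$, and if $A\neq\kk$ it is a free product of nontrivial algebras whose centre is $\kk$; in either case the centre cannot be as large as $\kk[X]$. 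Since Morita equivalent algebras have isomorphic centres, $F(L_{n-1})$ is never Morita equivalent to $\U(L_{n-1})$. The whole point is to aim at a target $\U(L)$ whose centre exceeds $\kk$, and $\kk[X]$ does that while $\kk\langle X_{1},\dots,X_{n}\rangle$ does not.
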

\begin{proof}
Recall that if two rings (or, in particular, $\kk$-algebras) are Morita equivalent, then their centres are isomorphic.

Let $L_{1}$ be the free $n$-Lie algebra generated by one element. The coproduct of $n-1$ copies of $L_{1}$ is the free $n$-Lie algebra on $n-1$ generators, denoted by $L_{n-1}$. Its associated algebra $\U(L_{n-1})$ is $\kk[X]$ as in Example~\ref{E:free}, whose centre is itself. 

Now $\kk[X]$ cannot be Morita equivalent to $F(L_{1} + \cdots + L_{1}) \cong F(L_{1}) + \cdots + F(L_{1})$: the latter algebra being a coproduct, its centre cannot be bigger than $\kk$, so is strictly smaller than $\kk[X]$.
\end{proof}

\begin{Th}
The functor $\U \colon \nLie \to $ $\Ass$ has a right adjoint if and 
only if $n = 2$. More precisely, for $n > 2$ there is no functor $F \colon \nLie \to $ $\Ass$ 
with a right adjoint $G \colon \Ass \to \nLie$ such that there is an equivalence of categories between $\LMod$ 
and $\Mod_{\FF(L)}$ for all $L$.
\end{Th}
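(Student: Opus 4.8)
The plan is to reduce the whole statement to Lemma~\ref{Lemma not Morita equivalent}, which already contains the essential computation, and to treat the ``more precisely'' part as the genuinely new content. First I would observe that the biconditional follows formally from it. The functor $\U$ satisfies (U1) by its very construction, so it is a legitimate candidate for the functor $F$ appearing in the second sentence. Hence if $\U$ admitted a right adjoint for some $n>2$, we would have a functor $F=\U$ with a right adjoint realising the equivalence $\LMod\simeq\Mod_{F(L)}$, directly contradicting the ``more precisely'' claim; this rules out a right adjoint for $\U$ when $n>2$. The reverse implication, that $\U$ \emph{does} have a right adjoint when $n=2$, is the classical fact recalled in (U2): the functor $(-)_{\lie}\colon\Ass\to\Lie$ equipping an associative algebra with the commutator bracket is right adjoint to the usual universal enveloping algebra functor.

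It then remains to prove the ``more precisely'' statement for $n>2$, which I would argue by contradiction. Suppose $F\colon\nLie\to\Ass$ has a right adjoint $G$ and satisfies $\LMod\simeq\Mod_{F(L)}$ for every $n$-Lie algebra $L$. Since any functor possessing a right adjoint is itself a left adjoint, $F$ preserves all colimits, and in particular binary coproducts. This is exactly the hypothesis of Lemma~\ref{Lemma not Morita equivalent}, so that lemma furnishes an $n$-Lie algebra $L$ for which $F(L)$ is \emph{not} Morita equivalent to $\U(L)$. On the other hand, the two equivalences of module categories combine to give the opposite conclusion: because $\U$ satisfies (U1) we have $\Mod_{\U(L)}\simeq\LMod$, and composing with the assumed $\LMod\simeq\Mod_{F(L)}$ yields $\Mod_{F(L)}\simeq\Mod_{\U(L)}$ for \emph{every} $L$. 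Since two $\kk$-algebras are Morita equivalent precisely when their module categories are equivalent, $F(L)$ is Morita equivalent to $\U(L)$ for all $L$, contradicting the previous sentence. This contradiction proves the ``more precisely'' statement, and with it the theorem.

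I do not expect a serious obstacle in this final step, since the substantive work has already been done in Lemma~\ref{Lemma not Morita equivalent}: the explicit witness is the free $n$-Lie algebra $L_{n-1}$ on $n-1$ generators, realised as a coproduct $L_1+\cdots+L_1$, whose associated algebra is $\kk[X]$, and the comparison of centres of coproducts supplies the failure of Morita equivalence. The only points deserving care are the two standard categorical inputs being invoked, namely that a left adjoint preserves coproducts and that equivalence of module categories is the very definition of Morita equivalence. Once these are in place the argument is purely formal, so the difficulty of the theorem lies entirely in the centre computation already isolated in the lemma rather than in the deduction above.
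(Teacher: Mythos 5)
Your proposal is correct and follows essentially the same route as the paper: both reduce the statement to Lemma~\ref{Lemma not Morita equivalent} by noting that a functor with a right adjoint preserves binary sums, while the assumed equivalences $\LMod\simeq\Mod_{F(L)}$ and $\LMod\simeq\Mod_{\U(L)}$ force $F(L)$ and $\U(L)$ to be Morita equivalent for every $L$. Your explicit derivation of the first sentence from the ``more precisely'' clause (using that $\U$ itself satisfies (U1)) is only slightly more detailed than the paper's, which leaves that step implicit.
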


\begin{proof}
If $n = 2$ this result is well known. Consider the case when $n > 2$; assume that there is an adjoint pair $F \dashv G$ as required. Then, on the one hand, $F$ preserves binary sums, while on the other hand, we have an equivalence of categories $\LMod \simeq \Mod_{F(L)} \simeq \Mod_{\U(L)}$ for any $n$-Lie algebra $L$. This is in contradiction with Lemma~\ref{Lemma not Morita equivalent}.
\end{proof}

This theorem shows that for $n > 2$ there is no way we can obtain a 
functor $\FF \colon {\nLie \to\Ass}$ satisfying both requirements (U1) and (U2) of the introduction: to have an equivalence of 
categories between $\LMod$ and $\Mod_{\FF(L)}$ for all $L$ and to have a right adjoint for the functor $\FF$. In particular, it is shown that $\U(L)$ does not satisfy (U2). Of course there may still exist other functors~$\FF$ such that all $\FF(L)$ are Morita equivalent to $\U(L)$.

\begin{Rem}
If $\kk = \cc$ and $L = V_n$, then $\U(L)$ coincides with the construction of the universal enveloping algebra given in~\cite{Dzh} and~\cite{Bava}. Moreover, for any $n$-Lie algebra~$L$ such that $\BLb_{n-1}^{\Lambda}(L)$ is also a Lie algebra, $\U(L)$ is isomorphic to the universal enveloping algebra given in~\cite{Dzh}. However, if $(\Lambda^{n-1}L, \circ)$ is a Lie algebra, then $\U(L)$ might be different from the universal enveloping algebra of~\cite{Bava}.
\end{Rem}

\section{(Co)Homology theory and the associated algebra}\label{S:Hom}

Let $L$ be an $n$-Lie algebra and $M$ an $L$-module. 
Let
\[
M^L = \{m \in M \mid \text{$[x_1, \dots, x_{n-1}, m] = 0$ for all $x_i \in L$} \}
\] 
be the \defn{invariant submodule} of $M$, and let $M_L = M/LM$ be 
the \defn{coinvariant submodule}. As in Lie algebras, 
we can obtain (co)homology theories deriving the invariants and coinvariants functors.

\begin{De}
The \defn{homology groups} of $M$ \defn{with coefficients in} $L$, denoted by $\HH_\ast(L, M)$ are the left derived functors of $(-)_L$. The \defn{cohomology groups} of $M$ \defn{with coefficients in} $L$, denoted by $\HH^\ast(L, M)$ are the right derived functors of~$(-)^L$.
\end{De}

There is an immediate relation between this (co)homology theory and the associated algebra. Let $\varepsilon \colon \U(L) \to \kk$ be the $\kk$-algebra homomorphism sending the inclusion of $\Lambda^{n-1}L$ to zero. Its kernel, $\Om(L)$, is called the \defn{augmentation ideal}. Therefore, $\Om(L)$ has a $\U(L)$-module structure. 

\begin{Pro}
Let $L$ be an $n$-Lie algebra and $M$ an $L$-module. There are isomorphisms
\begin{align*}
\HH_\ast(L, M) &\cong \Tor_\ast^{\U(L)}(\kk, M), \\
\HH^\ast(L, M) &\cong \Ext^\ast_{\U(L)}(\kk, M).
\end{align*}
\end{Pro}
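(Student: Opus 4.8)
The plan is to reduce the statement to the standard homological fact that, over the ring $\U(L)$ equipped with the distinguished module $\kk$ (with action given by the augmentation $\varepsilon$), the right derived functors of $\Hom_{\U(L)}(\kk,-)$ are $\Ext^\ast_{\U(L)}(\kk,-)$ and the left derived functors of $\kk\tensor_{\U(L)}-$ are $\Tor_\ast^{\U(L)}(\kk,-)$. Everything then follows once I transport the two functors being derived here, namely $(-)^L$ and $(-)_L$, across the equivalence $\LMod\simeq\Mod_{\U(L)}$ of (U1) and identify them with $\Hom_{\U(L)}(\kk,-)$ and $\kk\tensor_{\U(L)}-$ respectively. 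The first thing to pin down is that, under this equivalence, the trivial $L$-module $\kk$ corresponds to $\kk$ as a $\U(L)$-module via $\varepsilon$: the trivial action $[x_1,\dots,x_{n-1},c]=0$ is exactly the condition that $\varepsilon$ annihilates the image of $\Lambda^{n-1}L$.

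Next I would establish the two functor isomorphisms. For invariants, a $\U(L)$-linear map $\phi\colon\kk\to M$ is determined by $m=\phi(1)$, and linearity forces $u\cdot m=\varepsilon(u)\,m$ for every $u\in\U(L)$, which is equivalent to $\Om(L)\cdot m=0$. Since $\Om(L)=\U(L)\cdot\Lambda^{n-1}L$ as a left ideal, this in turn is equivalent to $[x_1,\dots,x_{n-1},m]=0$ for all $x_i$, that is, $m\in M^L$; hence $\phi\mapsto\phi(1)$ yields a natural isomorphism $(-)^L\cong\Hom_{\U(L)}(\kk,-)$. For coinvariants, the presentation $\kk=\U(L)/\Om(L)$ gives $\kk\tensor_{\U(L)}M\cong M/\Om(L)M$, so it remains to check $\Om(L)M=LM$. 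The inclusion $LM\subseteq\Om(L)M$ is immediate from $\Lambda^{n-1}L\subseteq\Om(L)$; for the reverse, a spanning element of $\Om(L)$ is a product $w_1\cdots w_k$ with $w_i$ in the image of $\Lambda^{n-1}L$ and $k\geq 1$, and peeling off the leftmost factor gives $(w_1\cdots w_k)m=w_1\cdot\bigl((w_2\cdots w_k)m\bigr)\in\Lambda^{n-1}L\cdot M=LM$. Thus $(-)_L\cong\kk\tensor_{\U(L)}-$.

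Finally I would conclude by invariance of derived functors under the equivalence. An equivalence of abelian categories is exact and carries projectives to projectives and injectives to injectives, and $\Mod_{\U(L)}$ has enough of both; therefore it identifies the left (respectively right) derived functors of the corresponding additive functors. Consequently $\HH_\ast(L,M)\cong\Tor_\ast^{\U(L)}(\kk,M)$ and $\HH^\ast(L,M)\cong\Ext^\ast_{\U(L)}(\kk,M)$. The argument is essentially routine given (U1); the only points that genuinely require care are the identification $\Om(L)M=LM$ together with the dual statement that annihilation by $\Om(L)$ coincides with annihilation by $\Lambda^{n-1}L$, both of which rest on $\Om(L)$ being generated as a one-sided ideal by the image of $\Lambda^{n-1}L$, and the verification that the two functor isomorphisms above are natural in $M$.
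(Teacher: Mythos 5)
Your proposal is correct and follows essentially the same route as the paper: the paper's proof likewise reduces to checking that the underlying functors coincide, via $\kk\tensor_{\U(L)}M\cong M/\Om(L)M=M/LM=M_L$ and $\Hom_{\U(L)}(\kk,M)=M^L$, and then invokes the standard comparison of derived functors. You simply spell out the details (the identification of the trivial module, $\Om(L)M=LM$, and the transfer across the equivalence) that the paper leaves implicit.
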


\begin{proof}
As in the Lie algebra case (see~\cite{Wei}), we just have to check that the underlying functors are the same.
\[
\kk \t_{\U(L)} M = \tfrac{\U(L)}{\Om(L)} \t_{\U(L)} M \cong \tfrac{M}{\Om(L) M} = \tfrac{M}{LM} = M_L,
\]
and
\[
\Hom_{\U(L)}(\kk, M) = \Hom_L(\kk, M) = M^L.\qedhere
\]
\end{proof}

Following the computations done for Lie algebras in~\cite[Section 7.4]{Wei} we obtain that $\HH_1(L, \kk) \cong \Om(L)/ \Om(L)^2$ and $\HH^1(L, \kk) \cong \Hom_\kk(\Om(L), \kk)$. In the particular case of Example~\ref{E:free}, we see that 
\begin{align*}
\HH_1(L_m, \kk) \cong \coprod_{\binom{m}{n}} \kk \qquad \text{and} \qquad \HH^1(L_m, \kk) \cong \prod_{\binom{m}{n}} \kk.
\end{align*}
These results show that the cohomology theory defined above is different from the $n$-Lie algebra cohomology theories studied in~\cite{Tak},~\cite{DaTa} and~\cite{AKMS} when $n > 2$.

\section*{Acknowledgments}

The authors would like to thank José Manuel Casas, 
Emzar Khmaladze and Manuel Ladra for us suggesting to carry out this research
and for their useful comments and remarks. The first author would like to thank 
the Institut de Recherche en Mathématique et Physique (IRMP) for its kind hospitality
during his stay in Louvain-la-Neuve.


\providecommand{\noopsort}[1]{}
\providecommand{\bysame}{\leavevmode\hbox to3em{\hrulefill}\thinspace}
\providecommand{\MR}{\relax\ifhmode\unskip\space\fi MR }
\providecommand{\MRhref}[2]{%
 \href{http://www.ams.org/mathscinet-getitem?mr=#1}{#2}
}
\providecommand{\href}[2]{#2}

\end{document}